\documentclass[a4paper,oneside,reqno,12pt]{amsart}
\usepackage[utf8]{inputenc}
\usepackage[T1]{fontenc}
\usepackage{lmodern}
\usepackage{amsmath,amssymb,amsfonts,mathtools}
\usepackage{fullpage}
\usepackage[english]{babel}
\usepackage{xcolor,graphicx,tikz}
\usetikzlibrary{calc,arrows.meta}
\usepackage[font=small,labelfont=bf]{caption}
\usepackage{hyperref}
\hypersetup{pdftitle={KKM meets colorful Caratheodory},pdfauthor={Alexander Polyanskii},colorlinks=true,urlcolor=blue,linkcolor=red!65!black,citecolor=green!40!black}
\newtheorem{theorem}{Theorem}
\newtheorem{lemma}[theorem]{Lemma}

\theoremstyle{remark}

\newtheorem*{remark*}{Remark}
\DeclareMathOperator{\conv}{conv}
\DeclareMathOperator{\aff}{aff}
\DeclareMathOperator{\verts}{vert}
\title[KKM meets Carath\'eodory]{KKM meets colorful Carath\'eodory}
\author{Alexander~Polyanskii}
\address{Alexander Polyanskii, Department of Mathematics, Emory University, 400 Dowman Drive, Atlanta, GA 30322, USA}
\email{apolian@emory.edu}
\keywords{KKM lemma, colorful Carath\'eodory theorem, simplicial complex}
\begin{document}
\begin{abstract}
The colorful Carath\'eodory theorem states that if
$V_1,\dots,V_{d+1}\subset\mathbb R^d$ satisfy
$0\in\conv V_i$ for every $i$, then there exist points
$v_i\in V_i$ such that
$0\in\conv\{v_1,\dots,v_{d+1}\}$.

We give a new proof based on triangulations and the classical KKM lemma.
We lift the sets $V_i$ to parallel affine copies of $\mathbb R^d$ in
$\mathbb R^d\times\mathbb R^d$ and use a triangulation of the resulting
point configuration to construct a KKM cover. Using the same approach, we prove the very colorful Carath\'eodory theorem.
\end{abstract}
\maketitle
\section{Introduction}\label{sec:introduction}

In this paper, we study properties of convex hulls of finite sets in Euclidean spaces, although the results remain true for infinite sets as well. If the dimension of the ambient space is not important to the discussion, we simply refer to it as ``a Euclidean space''; otherwise, we specify its dimension.

For a finite set $X$ in a Euclidean space, let $\conv X$ denote its convex hull.
We write $0$ for the origin. Carath\'eodory's theorem~\cite{Caratheodory1907} bounds the number of points of $X$ needed to represent a point of $\conv X$.

\begin{theorem}[Carath\'eodory's theorem]\label{thm:caratheodory}
Let $V\subset\mathbb R^d$ be a finite set such that $0\in\conv V$.
Then there exist points $v_1,\dots,v_{d+1}\in V$, not necessarily distinct, such that
\[
0\in\conv\{v_1,\dots,v_{d+1}\}.
\]
\end{theorem}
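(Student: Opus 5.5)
The plan is the classical linear-algebra reduction on the size of a minimal representing set. Since $0\in\conv V$ and $V$ is finite, I fix a subset $W=\{w_1,\dots,w_m\}\subseteq V$ such that $0\in\conv W$ and $m$ is as small as possible. I then write
\[
0=\sum_{j=1}^m \lambda_j w_j,\qquad \lambda_j\ge 0,\quad \sum_{j=1}^m\lambda_j=1.
\]
By minimality every $\lambda_j$ is strictly positive, since otherwise dropping $w_j$ would give a smaller set. The goal is to show $m\le d+1$. Once that is done, I list $w_1,\dots,w_m$ and pad the list with repetitions of $w_1$ to reach $d+1$ points $v_1,\dots,v_{d+1}$. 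This is why the statement allows the points to be non-distinct.

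Suppose instead that $m\ge d+2$. Then the $m-1\ge d+1$ vectors $w_2-w_1,\dots,w_m-w_1$ in $\mathbb R^d$ are linearly dependent. Equivalently, the points $w_j$ are affinely dependent, so there are reals $\mu_1,\dots,\mu_m$, not all zero, with
\[
\sum_j \mu_j w_j=0\qquad\text{and}\qquad \sum_j\mu_j=0.
\]
Because the $\mu_j$ sum to zero and are not all zero, at least one of them is positive. I set
\[
t=\min\{\lambda_j/\mu_j : \mu_j>0\}>0
\]
and define $\lambda_j'=\lambda_j-t\mu_j$.

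These new coefficients form another convex combination of $0$ using only points of $W$:
\[
\sum_j \lambda'_j w_j=0,\qquad \sum_j\lambda'_j=1,\qquad \lambda_j'\ge 0 .
\]
The first two identities follow from the defining properties of the $\mu_j$. Nonnegativity follows from the choice of $t$, together with the fact that $\lambda_j'\ge\lambda_j$ whenever $\mu_j\le 0$. Moreover, $\lambda'_{j_0}=0$ for an index $j_0$ attaining the minimum. So $0\in\conv(W\setminus\{w_{j_0}\})$, which contradicts the minimality of $m$.

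No step here is a serious obstacle. The only points needing care are the existence of a positive $\mu_j$, which comes from $\sum\mu_j=0$ and $\mu\neq 0$, and checking that the choice of $t$ keeps all coefficients nonnegative while killing at least one of them.
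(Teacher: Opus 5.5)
Your argument is correct, but it is not the route the paper takes. The paper obtains Carath\'eodory's theorem as an immediate corollary of the existence of a triangulation $\mathcal T$ of $V$ (Lemma~\ref{lemma existence of triangulation}). By the partition~\eqref{eq:partition} of $|\mathcal T|=\conv V$ into relative interiors, the origin lies in $\sigma^\circ$ for a unique $\sigma\in\mathcal T$. The vertices of $\sigma$ belong to $V$ and are affinely independent, so there are at most $d+1$ of them, and padding by repetition finishes. You instead run the classical elimination argument: take a minimal $W\subseteq V$ with $0\in\conv W$, and use an affine dependence among $m\ge d+2$ points to shift the coefficients until one of them vanishes. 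Your version is elementary and self-contained. The paper's one-line proof rests entirely on the triangulation lemma, whose proof is omitted there and is considerably less elementary than your argument. What the paper's route buys is a preview of its method: the proof of the colorful theorem uses the same mechanism, locating a point in the relative interior of the unique simplex of a triangulation that contains it and then controlling that simplex's vertices. Incidentally, your argument uses $m\ge d+2$ only to produce an affine dependence. So it actually shows that the minimal $W$ is affinely independent with all $\lambda_j>0$, i.e.\ $0$ lies in the relative interior of the simplex $\conv W$ with vertices in $V$. This is the same structural conclusion that the triangulation provides.
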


Our aim is to give a new proof of the following generalization, due to B\'ar\'any~\cite{Barany1982}.

\begin{theorem}[Colorful Carath\'eodory theorem]\label{thm:colorful}
Let $V_1,\dots,V_{d+1}\subset\mathbb R^d$ be finite sets such that $0\in\conv V_i$ for every $i$.
Then there exist points $v_1\in V_1,\dots,v_{d+1}\in V_{d+1}$ such that
\[
0\in\conv\{v_1,\dots,v_{d+1}\}.
\]
\end{theorem}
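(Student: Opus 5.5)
The plan is to use the Cayley trick to lift the $V_i$, cut the lifted polytope by a copy of the simplex, and find the rainbow tuple with a Sperner/KKM argument on the induced subdivision. Let $e_1,\dots,e_{d+1}$ be the standard basis of $\mathbb R^{d+1}$, and let $\Delta=\conv\{e_1,\dots,e_{d+1}\}$. Lift each point $v\in V_i$ to $\hat v=(v,e_i)\in\mathbb R^d\times\mathbb R^{d+1}$, and call $i$ its \emph{color}. Let $P$ be the finite set of all lifted points, and set $Q=\conv P$, which lies in the $2d$-dimensional affine space $\mathbb R^d\times\aff\Delta$.

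For $\lambda\in\Delta$, the fiber of $Q$ over $\lambda$ is $\bigl(\sum_i\lambda_i\conv V_i\bigr)\times\{\lambda\}$. Since $0\in\conv V_i$ for every $i$, this fiber contains $(0,\lambda)$. Hence the section $S=\{0\}\times\Delta$ is contained in $Q$.

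The key observation concerns rainbow faces. Suppose $(0,\lambda)=\sum_j\mu_j\hat w_j$ is a convex combination of lifted points. Comparing second coordinates shows that the total weight on points of color $i$ equals $\lambda_i$. So if $\lambda$ lies in the interior of $\Delta$ and the $\hat w_j$ are $d+1$ points carrying all $d+1$ colors, then there is exactly one point of each color. Comparing first coordinates then gives $\sum_i\lambda_iv_i=0$ with $v_i\in V_i$, which is the desired conclusion.

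Next I fix a triangulation $T$ of $P$, that is, a triangulation of $Q$ using only points of $P$ as vertices. Intersecting its simplices with $S$ gives a polyhedral subdivision of $\Delta$ into cells $\sigma\cap S$. After a generic perturbation, $S$ meets $T$ transversally: every vertex of the induced subdivision is a point $(0,\lambda)$ in the relative interior of a $d$-face $\tau\in T$, so $\tau$ has exactly $d+1$ vertices. I intend to obtain this by replacing $0$ with a generic nearby point $z$ contained in every $\conv V_i$ (first enlarging each $V_i$ slightly if necessary), and then passing to the limit using closedness and finiteness of the set of colorful tuples.

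By the weight computation, the colors of such a $\tau$ are exactly $\operatorname{supp}\lambda$. Either $\tau$ has all $d+1$ colors, in which case $\tau$ is rainbow and we are done, or some color $i\in\operatorname{supp}\lambda$ occurs at least twice in $\tau$; in the latter case I label the vertex $\lambda$ by such an $i$. Assume, for contradiction, that no vertex is rainbow. Then a vertex lying in the face $\conv\{e_i:i\in I\}$ of $\Delta$ receives a label in $I$. This is exactly the Sperner boundary condition, and I will turn it into a KKM cover as follows. Let $\phi_i$ be the piecewise-linear function on the subdivision that equals $1$ at vertices labeled $i$ and $0$ at the other vertices, and put $C_i=\{\lambda:\phi_i(\lambda)\ge 1/(d+1)\}$. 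By the boundary condition, these closed sets satisfy the KKM hypothesis: every point of the face spanned by $\{e_i:i\in I\}$ is covered by $\bigcup_{i\in I}C_i$. The KKM lemma then gives a point in $\bigcap_iC_i$. That point lies in a single cell $\sigma\cap S$ whose vertices carry all $d+1$ labels. Each such vertex lies in a face of $\sigma$.

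To finish, label $i$ means that $\sigma$ has at least two vertices of color $i$. Since this holds for all $d+1$ colors, $\sigma$ has at least $2d+2$ vertices. But $\sigma$ is a simplex in a $2d$-dimensional space, so it has at most $2d+1$ vertices, a contradiction. Hence some vertex of the subdivision is rainbow, and the theorem follows.

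I expect two steps to be the main obstacles. The first is the genericity and perturbation step, which ensures that the vertices of the induced subdivision come from faces with exactly $d+1$ vertices. The second is carefully deriving this polyhedral (non-simplicial) Sperner statement from the KKM lemma.
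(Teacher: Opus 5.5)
Your argument is correct in substance, but it takes a different route from the paper. Both proofs use the same lift (the paper uses affinely independent points $a_i\in\mathbb R^d$ instead of your $e_i$) and a triangulation of the lifted set. Both also rest on the same decisive count: a simplex in the $2d$-dimensional affine hull has at most $2d+1$ vertices, so some color appears on it at most once.

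The paper, however, never looks at the subdivision induced on the slice. It takes as closed sets $U_i$ the unions of simplices of $\mathcal T$ with at most one vertex of color $i$. The covering condition is exactly the vertex count. The facet condition $\conv(\{0\}\times(A\setminus\{a_i\}))\subseteq U_i$ comes from the supporting-hyperplane lemma. The facet form of KKM applied to $\conv(\{0\}\times A)$ then directly produces a point $(0,a)$ whose carrier simplex has at most one vertex of each color.

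Your proof is the complementary, Sperner-type version: you label by a \emph{repeated} color, use the face form of KKM, and argue by contradiction. It makes the combinatorics on the slice explicit and reduces to classical Sperner. The cost is a genericity discussion and a passage from a polyhedral to a simplicial subdivision, neither of which the paper needs.

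There are two technical points. First, as written, $\phi_i$ is not well defined. The cells $\sigma\cap S$ are in general not simplices (for $d=2$ a planar slice of a $4$-simplex can be a pentagon), and on such a cell there is usually no affine function with prescribed values at the vertices. To fix this, refine the induced subdivision to a triangulation of $\Delta$ with no new vertices (for instance a pulling triangulation) and define $\phi_i$ there. The fully labeled simplex you obtain then lies in a single cell $\sigma\cap S$, and your vertex count applies.

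Second, the perturbation step is unnecessary. Suppose a vertex $(0,\lambda)$ of the subdivision lies in $\tau^\circ$. If the vertices of $\tau$ have pairwise distinct colors, your weight computation already gives $0\in\conv\{v_i: i\in\operatorname{supp}\lambda\}$ with $v_i\in V_i$, and you can add arbitrary points of the remaining colors. Otherwise some color of $\operatorname{supp}\lambda$ repeats in $\tau$, and you label by it. This dichotomy uses no dimension count on $\tau$, so the rest of your argument runs verbatim at $0$ itself. If you do keep the perturbation, the enlargement must replace each $v\in V_i$ by a small cloud around $v$ rather than add new points near $0$. Otherwise the limiting tuple need not come from the $V_i$.
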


Our approach is to lift the color classes $V_i$ from $\mathbb R^d$ to $\mathbb R^d\times \mathbb R^d\simeq \mathbb R^{2d}$ by considering the Cartesian products $V_i\times \{a_i\}$, where $a_1,\dots,a_{d+1}\in \mathbb R^d$ are any $d+1$ affinely independent points. We then triangulate the convex hull of the union of the resulting sets. The second ingredient is the facet form of the Knaster--Kuratowski--Mazurkiewicz (KKM) lemma~\cite{KnasterKuratowskiMazurkiewicz1929}.

\begin{theorem}[KKM lemma]\label{thm:kkm}
Let $\sigma$ be a $k$-dimensional simplex in a Euclidean space with facets $\sigma_1,\dots,\sigma_{k+1}$.
Let $X_1,\dots,X_{k+1}$ be closed sets in the same space satisfying
\[
\sigma\subseteq X_1\cup\cdots\cup X_{k+1}
\qquad\text{and}\qquad \sigma_i\subseteq X_i\quad\text{for every }i.
\]
Then $\sigma\cap X_1\cap\cdots\cap X_{k+1}\ne\varnothing$.
\end{theorem}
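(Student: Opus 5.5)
The plan is to argue by contradiction and reduce the facet form of the KKM lemma to the fact that the boundary sphere $\partial\sigma\cong S^{k-1}$ is not contractible in itself, which is equivalent to Brouwer's fixed point theorem. The case $k=0$ is immediate: then $\sigma$ is a point and the covering hypothesis puts it in $X_1$. So assume $k\ge1$. Write $\lambda_1,\dots,\lambda_{k+1}$ for the barycentric coordinates on $\sigma$, indexed so that $\sigma_i=\{\lambda_i=0\}$ is the facet opposite the $i$-th vertex.

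Suppose that $\sigma\cap X_1\cap\cdots\cap X_{k+1}=\varnothing$. Put $d_i(x)=\operatorname{dist}(x,X_i)$; each $d_i$ is continuous, and $d_i(x)=0$ exactly when $x\in X_i$, because $X_i$ is closed. Compactness of $\sigma$ is not needed here: the assumption says that no $x\in\sigma$ has all $d_i(x)=0$, so $D(x)=\sum_j d_j(x)>0$ on $\sigma$. Define $h\colon\sigma\to\sigma$ as the point with barycentric coordinates $\lambda_i(h(x))=d_i(x)/D(x)$. This map is continuous. Since $\sigma\subseteq X_1\cup\cdots\cup X_{k+1}$, some $d_i(x)$ vanishes, so $h(x)$ lies in a facet. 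Hence $h$ maps $\sigma$ into $\partial\sigma$.

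Next we use the facet condition. If $x\in\sigma_j$, then $x\in X_j$, so $d_j(x)=0$ and therefore $h(x)\in\sigma_j$. Thus for $x\in\partial\sigma$, every facet containing $x$ also contains $h(x)$. Consider the straight-line homotopy $H(x,t)=(1-t)x+t\,h(x)$ for $x\in\partial\sigma$. Pick a facet $\sigma_j$ containing $x$; it contains $h(x)$ as well and is convex, so the whole segment stays in $\sigma_j\subseteq\partial\sigma$. Hence $h|_{\partial\sigma}$ is homotopic to $\mathrm{id}_{\partial\sigma}$ inside $\partial\sigma$.

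On the other hand, $h|_{\partial\sigma}$ extends over the contractible set $\sigma$ while taking values in $\partial\sigma$, so it is null-homotopic in $\partial\sigma$. It follows that $\mathrm{id}_{S^{k-1}}$ is null-homotopic, which contradicts the non-contractibility of spheres (equivalently, the no-retraction theorem). The step that needs care is checking that the homotopy never leaves $\partial\sigma$, and this is exactly where the hypothesis $\sigma_i\subseteq X_i$ is used. Alternatively one could run a Sperner-type labeling argument on fine triangulations and pass to a limit using closedness, but the topological route above is shorter.
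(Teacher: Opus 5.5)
Your proof is correct. It takes a different route from the paper only in the sense that the paper gives no proof of this statement: it uses the KKM lemma as a classical black box, citing Knaster--Kuratowski--Mazurkiewicz.

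The classical argument behind that citation is combinatorial and has two steps. First, one matches the facet form with the usual vertex form. Write the vertices as $e_1,\dots,e_{k+1}$ with $\sigma_j$ opposite $e_j$, and set $Y_i:=X_{i+1}$ with indices mod $k+1$. Every nonempty proper face $\conv\{e_l:l\in I\}$ has some $i\in I$ with $i+1\notin I$, so it lies in $\sigma_{i+1}\subseteq Y_i$. Second, one labels the vertices of finer and finer triangulations of $\sigma$, applies Sperner's lemma to get small fully labelled cells, and passes to a limit using compactness of $\sigma$ and closedness of the $X_i$.

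Your argument handles the facet form directly. The normalized distance map $h\colon\sigma\to\partial\sigma$ is facet-preserving on $\partial\sigma$ because $\sigma_i\subseteq X_i$, so the straight-line homotopy to the identity stays in $\partial\sigma$. Since $h|_{\partial\sigma}$ also extends over the convex set $\sigma$, $\mathrm{id}_{\partial\sigma}$ becomes null-homotopic, a contradiction.

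This is shorter, needs no limiting argument, and shows KKM as a reformulation of the no-retraction/Brouwer theorem. The cost is that it imports the non-contractibility of $S^{k-1}$, which itself needs homology or, again, Sperner's lemma. The Sperner route is fully elementary and closer in spirit to the triangulation arguments of the paper.

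One detail is worth stating explicitly: $d_i$ is well defined because $X_i\supseteq\sigma_i\neq\varnothing$ once $k\ge 1$.
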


The connection between combinatorial topology and the colorful Carath\'eodory theorem is well known. In particular, the theorem follows from Meshulam's lemma, a homological generalization of Sperner's lemma; see the survey by De Loera, Goaoc, Meunier, and Mustafa~\cite[Proposition~2.6 and Section~3.1]{DeLoeraGoaocMeunierMustafa2019}. Another approach using KKM-type results was given by Frick and Zerbib~\cite{FrickZerbib2019}. Their proof relies on a generalization of the KKM lemma, while our proof uses its classical form.

Section~\ref{sec:notation} collects the definitions and facts about triangulations used in the proofs. In Section~\ref{sec:colorful}, we prove the colorful Carath\'eodory theorem (Theorem~\ref{thm:colorful}). In Section~\ref{sec:very-colorful}, we adapt the argument to prove the very colorful Carath\'eodory theorem (Theorem~\ref{thm:very-colorful}).

\section{Notation}\label{sec:notation}

An \emph{affine combination} of points $x_1,\dots,x_m$ in a Euclidean space is a sum $\sum_{j=1}^m\lambda_jx_j$ with real coefficients satisfying $\sum_{j=1}^m\lambda_j=1$.
For a finite set $X$ in a Euclidean space, its \emph{affine hull}, denoted by $\aff X$, is the set of all affine combinations of points from $X$.
We put $\aff\varnothing=\varnothing$.

A \emph{convex combination} of points $x_1,\dots,x_m$ in a Euclidean space is a sum $\sum_{j=1}^m\lambda_jx_j$ with $\lambda_j\geq 0$ and $\sum_{j=1}^m\lambda_j=1$.
For a finite set $X$ in a Euclidean space, its \emph{convex hull}, denoted by $\conv X$, is the set of all convex combinations of points of $X$. We put $\conv\varnothing=\varnothing$. The convex hull of a finite set is closed.
A convex combination is \emph{positive} if all its coefficients are strictly positive.

A finite set $\{v_1,\dots,v_{k+1}\}$ in a Euclidean space is called \emph{affinely independent} if the vectors $v_2-v_1,\dots,v_{k+1}-v_1$ are linearly independent.
A \textit{simplex} is the convex hull of an affinely independent set in a Euclidean space; this set is called its \emph{vertex set}.
For a simplex~$\sigma$, we denote its vertex set by $\verts \sigma$.
By definition, if $\sigma\subseteq \mathbb R^n$, then it has at most $n+1$ vertices.
Since the vertices of $\sigma$ are affinely independent, every point of $\sigma$ has a unique representation as a convex combination of its vertices.
We regard the empty set as a simplex with an empty vertex set.

A \emph{face} of $\sigma$ is the convex hull of a subset of its vertices.
It is called proper if it is distinct from $\sigma$.
The \emph{relative interior} of a nonempty simplex $\sigma$, denoted by $\sigma^\circ$, consists of all \emph{positive} convex combinations of its vertices.
In particular, for a point $v$, we have $\{v\}^\circ=\{v\}$; we put $\varnothing^\circ=\varnothing$.
Since every point of $\sigma$ has a unique representation as a convex combination of its vertices, $\sigma$ is the disjoint union of the relative interiors of all its faces:
\begin{equation}
\label{equation sigma is the disjoint union of relative interiors of its faces}
\sigma=\bigsqcup_{\tau\text{ a face of }\sigma}\tau^\circ.
\end{equation}

A \emph{simplicial complex} is a finite family $\mathcal T$ of simplices such that every face of a simplex in $\mathcal T$ belongs to $\mathcal T$, and the intersection of any two simplices in $\mathcal T$ is a face of each.
We write $|\mathcal T|:=\bigcup_{\sigma\in\mathcal T}\sigma$ for its \emph{underlying set} and $\verts\mathcal T=\{v:\{v\}\in\mathcal T\}$ for its \emph{vertex set}.
Since the underlying set is a finite union of closed simplices, it is also closed.
A \emph{subcomplex} is a subfamily that is itself a simplicial complex.

Moreover,
\begin{equation}\label{eq:partition}
|\mathcal T|=\bigcup_{\sigma\in\mathcal T}\sigma\overset{\eqref{equation sigma is the disjoint union of relative interiors of its faces}}{=}\bigcup_{\sigma\in\mathcal T}\sigma^\circ=\bigsqcup_{\sigma\in\mathcal T}\sigma^\circ,
\end{equation}
where the last equality follows from $\sigma_1^\circ\cap\sigma_2^\circ=\varnothing$ for distinct $\sigma_1,\sigma_2\in\mathcal T$.
Indeed, suppose that $\sigma_1^\circ\cap\sigma_2^\circ\ne \varnothing$ and let $x$ be any point in this intersection.
Then $x$ lies in $\sigma_1\cap\sigma_2$, which is, by definition, a common face of $\sigma_1$ and $\sigma_2$.
By~\eqref{equation sigma is the disjoint union of relative interiors of its faces} applied to the simplex $\sigma_1\cap\sigma_2$, the point $x$ belongs to $\tau^\circ$ for some common face $\tau$ of $\sigma_1$ and $\sigma_2$.
Since $\sigma_1\ne\sigma_2$, the face $\tau$ is proper in at least one of them, say, $\sigma_1$.
By~\eqref{equation sigma is the disjoint union of relative interiors of its faces} applied to $\sigma_1$, the sets $\sigma_1^\circ$ and $\tau^\circ$ are disjoint, contradicting $x\in\sigma_1^\circ\cap\tau^\circ$.

For a finite set $V$ in a Euclidean space, a simplicial complex is called a \textit{triangulation of $V$} if its vertex set is $V$ and its underlying set is $\conv V$. We finish this section with a few standard lemmas about triangulations, whose proofs are omitted.

\begin{lemma}\label{lemma existence of triangulation}
Every finite set in a Euclidean space has a triangulation.
\end{lemma}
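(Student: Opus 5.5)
We prove the lemma by induction on $|V|$, using a placing (beneath–beyond) construction: add the points of $V$ one at a time, and each time cone the newly visible part of the boundary from the new point. Order the points $v_1,\dots,v_m$ of $V$ arbitrarily and set $V_j=\{v_1,\dots,v_j\}$. For $j=1$, the complex $\{\varnothing,\{v_1\}\}$ is a triangulation of $V_1$.

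Suppose $\mathcal T_{j-1}$ triangulates $V_{j-1}$ and put $P=\conv V_{j-1}$. If $v_j\in\aff V_{j-1}$, work inside this affine hull and call a face $\tau\in\mathcal T_{j-1}$ \emph{visible} from $v_j$ if $\tau\subseteq\partial P$ (relative boundary) and the segment $[v_j,x]$ meets $P$ only at $x$ for every $x\in\tau$. Define
\[
\mathcal T_j=\mathcal T_{j-1}\cup\{\conv(\tau\cup\{v_j\}) : \tau \text{ visible}\}\cup\{\{v_j\}\}.
\]
If $v_j\notin\aff V_{j-1}$, every simplex of $\mathcal T_{j-1}$ counts as visible, so $\mathcal T_j$ is the cone over $\mathcal T_{j-1}$. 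If $v_j\in P$ already, the point would not be used as a vertex; this does not fit the required condition $\verts\mathcal T=V$. In that case we instead take the unique simplex $\sigma$ of $\mathcal T_{j-1}$ with $v_j\in\sigma^\circ$, which exists by~\eqref{eq:partition}, and perform a stellar subdivision: replace every simplex $\rho\supseteq\sigma$ by the cones $\conv(\tau\cup\{v_j\})$, where $\tau$ runs over the faces of $\rho$ not containing $\sigma$.

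Next we verify that $\mathcal T_j$ is a simplicial complex with vertex set $V_j$ and underlying set $\conv V_j$. Affine independence of each cone follows because $v_j$ lies strictly beyond the affine hull of every visible face; this uses the fact that a visible face lies in a facet hyperplane of $P$ that separates $v_j$ from $P$. Closure under faces is immediate, since the visible faces form a subcomplex. The union equals $\conv V_j$ because every point of $\conv V_j\setminus P$ lies on a segment from $v_j$ to a visible boundary point.

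The main obstacle is checking that two simplices intersect in a common face. For two new cones, we use that rays from $v_j$ through distinct relative-interior points of the visible region are disjoint outside $v_j$. For a cone and an old simplex, we show that the cone meets $P$ exactly in its base $\tau$, which is a face of both. The stellar case is handled the same way, locally within the star of $\sigma$. Throughout, the partition~\eqref{eq:partition} is the key tool for reducing intersections to relative interiors.
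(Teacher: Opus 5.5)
The paper does not prove this lemma. It is listed among the standard facts about triangulations whose proofs are omitted, so there is no argument in the paper to compare yours with.

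Your proof is the classical placing (beneath--beyond) construction, supplemented by a stellar subdivision whenever $v_j\in\conv V_{j-1}$, and it is correct. You are also right that with an arbitrary ordering some such extra step is unavoidable, since the paper's definition demands $\verts\mathcal T=V$.

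The claims you state without proof are standard and easy to supply:
\begin{itemize}
\item For a visible $\tau$, pick $x\in\tau^\circ$. The minimal face of $P$ containing $x$ contains $\tau$, and the segment condition at $x$ forces some facet $F$ of $P$ through that face to have $v_j$ strictly beyond $\aff F$. This gives affine independence and shows that the cone meets $P$ only in $\tau$. It also shows that a simplex whose relative interior contains a visible point is itself visible, which your covering step uses.
\item In the stellar case, a point of $\conv(\tau\cup\{v_j\})$ with positive weight on $v_j$ has positive barycentric coordinates (in $\rho$) at all vertices of $\sigma$. So it lies in no face not containing $\sigma$, and this one observation drives every intersection check there.
\end{itemize}

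If you only want this lemma, there is a shortcut: order $V$ by a linear functional that is injective on $V$. Then $v_j\notin\conv V_{j-1}$ for every $j$, only placing steps occur, and the stellar case, the least detailed part of your sketch, disappears.

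Your more general version buys something in return. Because it works for any order, you can start from a given triangulation $\mathcal T$ of $V$ and add the points of $V'$ one at a time. This also proves Lemma~\ref{lemma extension of triangulation}, for two reasons:
\begin{itemize}
\item placing steps delete no simplices;
\item a stellar subdivision at a point outside $\conv V=|\mathcal T|$ replaces only simplices containing that point, and none of these belongs to $\mathcal T$.
\end{itemize}
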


Lemma~\ref{lemma existence of triangulation} immediately implies Theorem~\ref{thm:caratheodory}.
Indeed, suppose that $0\in \conv V$ for a finite set $V\subset \mathbb R^d$.
By Lemma~\ref{lemma existence of triangulation}, there is a triangulation $\mathcal T$ of $V$.
By~\eqref{eq:partition}, the origin lies in the relative interior of a unique simplex in $\mathcal T$ whose vertices belong to $V$.
Recall that a simplex in $\mathbb R^d$ has at most $d+1$ vertices; repeating a vertex if necessary gives the stated conclusion.

\begin{lemma}\label{lemma supporting hyperplane and triangulation}
    Let $\mathcal T$ be a triangulation of a finite set $V$ in a Euclidean space. Suppose that $V$ lies in a closed halfspace bounded by a hyperplane $H$. Then $\{\sigma \in \mathcal T: \verts \sigma \subseteq V\cap H\}$ is a triangulation of $V\cap H$.
\end{lemma}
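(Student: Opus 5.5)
We need to show two things about the family $\mathcal T_H:=\{\sigma\in\mathcal T:\verts\sigma\subseteq V\cap H\}$: that it is a simplicial complex with vertex set $V\cap H$, and that its underlying set is $\conv(V\cap H)$. Write $H=\{x:\ell(x)=c\}$ for an affine functional $\ell$, with $\ell\ge c$ on $V$, and hence on $\conv V=|\mathcal T|$.

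\textbf{$\mathcal T_H$ is a simplicial complex with the right vertex set.} A face of $\sigma\in\mathcal T_H$ is in $\mathcal T$, and its vertices are a subset of $\verts\sigma\subseteq V\cap H$, so it lies in $\mathcal T_H$. The intersection of two members is a face of each, hence also in $\mathcal T_H$. Since $\verts\mathcal T=V$, each $v\in V\cap H$ gives $\{v\}\in\mathcal T_H$. Conversely, every vertex of $\mathcal T_H$ lies in $V\cap H$. So $\verts\mathcal T_H=V\cap H$.

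\textbf{$|\mathcal T_H|\subseteq\conv(V\cap H)$.} Every $\sigma\in\mathcal T_H$ is the convex hull of points of $V\cap H$, so it is contained in $\conv(V\cap H)$.

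\textbf{$\conv(V\cap H)\subseteq|\mathcal T_H|$.} Take $x\in\conv(V\cap H)$. Then $x\in\conv V=|\mathcal T|$, so by~\eqref{eq:partition} there is $\sigma\in\mathcal T$ with $x\in\sigma^\circ$. Write $x=\sum\lambda_j w_j$ as a positive convex combination of the vertices $w_j$ of $\sigma$. Each $w_j$ is a vertex of $\mathcal T$, so $w_j\in V$ and $\ell(w_j)\ge c$. Since $x\in H$, we have $c=\ell(x)=\sum\lambda_j\ell(w_j)$. All $\lambda_j$ are strictly positive and each $\ell(w_j)\ge c$, so every $\ell(w_j)=c$. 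Hence $\verts\sigma\subseteq V\cap H$, so $\sigma\in\mathcal T_H$ and $x\in|\mathcal T_H|$.

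The only real content is the last step. It relies on $x$ lying in the \emph{relative interior} of its carrier simplex, which forces all coefficients to be positive. It also relies on every vertex of a simplex of $\mathcal T$ being a point of $V$, which holds because faces of simplices in $\mathcal T$ belong to $\mathcal T$ and $\verts\mathcal T=V$. The empty case $V\cap H=\varnothing$ is handled automatically, since then $\mathcal T_H=\{\varnothing\}$.
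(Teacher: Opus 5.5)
The paper lists this lemma among the standard facts whose proofs are omitted, so there is no argument of the paper's to compare against. Your proof is correct and complete, and it is the standard one. The decisive step is handled properly: you take the carrier simplex $\sigma$ of $x\in\conv(V\cap H)$ via~\eqref{eq:partition}, and then use the strict positivity of the barycentric weights together with $\ell\ge c$ on $\verts\sigma\subseteq V$ to force $\verts\sigma\subseteq H$. This includes your justification that the vertices of simplices of $\mathcal T$ lie in $V$.
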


To prove the very colorful Carath\'eodory theorem, we use the following extension of Lemma~\ref{lemma existence of triangulation}.

\begin{lemma}
\label{lemma extension of triangulation}
    For a finite set $V$ in a Euclidean space, let $\mathcal T$ be a triangulation of $V$.
    Let $V'$ be any finite set of points in the same space that lie outside $\conv V$.
    Then there is a triangulation of $V\cup V'$ that contains $\mathcal T$ as a subcomplex.
\end{lemma}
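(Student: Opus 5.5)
Write $V'=\{w_1,\dots,w_m\}$. We add the points of $V'$ one at a time. This reduces the lemma to a single step: given a triangulation $\mathcal T$ of a finite set $V$ and a point $w\notin\conv V$, build a triangulation $\mathcal T'$ of $V\cup\{w\}$ that contains $\mathcal T$. There is one subtlety. After adding $w_1$, a later point $w_j$ may lie inside $\conv(V\cup\{w_1\})$, so the hypothesis of the one-point step fails. To avoid this, order $V'$ so that the hypothesis is preserved. Pick a generic linear functional $f$ and add points in decreasing distance from $\conv V$. More simply, take $w_1$ to be a vertex of $\conv(V\cup V')$ not in $\conv V$, add it, and repeat. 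In that version the invariant needed is \emph{only} that the new point is not in the current hull. When a point of $V'$ falls inside the current hull, we need a separate step for inserting a point into the interior of an existing triangulation.

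This suggests doing the general step directly. Given a triangulation $\mathcal S$ of a finite set $W$ and any point $w$ (inside or outside $\conv W$), I would produce a triangulation of $W\cup\{w\}$. It should keep every simplex of $\mathcal S$ that does not contain $w$ in its relative interior. If $w\notin\conv W$, it should keep all of $\mathcal S$. We need this general version anyway to handle later points that fall inside the hull. Maintaining $\mathcal T$ as a subcomplex then only needs $w\notin\conv V$ for each $w\in V'$, which is given. The inserted point never lies in any simplex of $\mathcal T$, so those simplices are never subdivided.

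For the case $w\notin\conv W$, I use beneath-beyond placing. Call a simplex $\tau\in\mathcal S$ \emph{visible} from $w$ if $\tau$ lies in the boundary of $\conv W$ and, for every point $x\in\tau$, the half-open segment $(x,w]$ meets $\conv W$ only at $x$. Equivalently, $\tau$ lies in a facet of $\conv W$ whose supporting hyperplane separates $w$ from $W$. Set
\[
\mathcal S'=\mathcal S\cup\{\conv(\verts\tau\cup\{w\}):\tau\in\mathcal S\text{ visible}\}\cup\{\{w\}\}.
\]
By Lemma~\ref{lemma supporting hyperplane and triangulation}, the simplices of $\mathcal S$ lying in a given facet form a triangulation of that facet. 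So the visible simplices form a subcomplex. For a visible $\tau$, $w\notin\aff\tau$, so each cone $\conv(\tau\cup\{w\})$ is a simplex. We then check three things. The family $\mathcal S'$ is closed under faces. Any two of its members meet in a common face; this follows because distinct cones over visible simplices only meet along cones over common faces, since rays from $w$ hit the visible boundary exactly once. Finally, the union is $\conv(W\cup\{w\})$: a point of this hull outside $\conv W$ lies on a segment from $w$ to a visible boundary point. If $\dim\conv(W\cup\{w\})>\dim\conv W$, every simplex is visible and the construction is the full cone.

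For the case $w$ inside $\conv W$ (needed only for later points of $V'$ that fall into an earlier hull), $w$ lies in $\sigma^\circ$ for a unique $\sigma$ by \eqref{eq:partition}. Replace each simplex $\rho$ having $\sigma$ as a face by the simplices $\conv(\verts\rho\setminus\{u\}\cup\{w\})$ for $u\in\verts\sigma$, together with their faces (a stellar subdivision). Since $w\notin\conv V$, $\sigma$ is not in $\mathcal T$ and no simplex of $\mathcal T$ has $\sigma$ as a face. Indeed, if some $\rho\in\mathcal T$ had $\sigma$ as a face, then $w\in\sigma\subseteq\conv V$. So $\mathcal T$ survives. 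One more point: the vertex set must be exactly $V\cup V'$. If $w$ is already a vertex, skip it. Otherwise each step adds exactly $w$ as a vertex.

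The main obstacle is verifying the intersection property for the cones in the outside case. I would argue it via the ray map from $w$: each point $y\in\conv(W\cup\{w\})\setminus\conv W$ determines a unique visible boundary point $x(y)$, namely the point where the segment from $w$ through $y$ exits $\conv W$. It also determines a unique visible simplex with $x(y)$ in its relative interior. This pins down the unique cone containing $y$ in its relative interior, which gives the disjoint-relative-interior characterization equivalent to the simplicial complex axioms.
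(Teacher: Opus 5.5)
The paper gives no proof of this lemma. It lists it among the standard facts about triangulations ``whose proofs are omitted,'' so there is nothing in the paper to compare against. Your argument is the standard placing construction, and it is correct.

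\begin{itemize}
\item \emph{Exterior point.} Coning $w$ over the visible subcomplex keeps all of $\mathcal S$. The relative interiors of the new cones lie outside $\conv W$. Your ray argument shows they are pairwise disjoint: two visible points on one ray from $w$ must coincide. For a finite face-closed family of simplices, disjoint relative interiors is indeed equivalent to the intersection axiom, since then $\sigma\cap\tau=\conv(\verts\sigma\cap\verts\tau)$.
\item \emph{Point inside the current hull.} The stellar subdivision at the carrier $\sigma$ destroys only simplices having $\sigma$ as a face. None of these lies in $\mathcal T$, because $w\notin\conv V$.
\end{itemize}
Because you handle both cases, your proof works for any insertion order.

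A few things to tidy.

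\begin{itemize}
\item The ordering heuristics in your first paragraph are backwards. With $V=\{0\}\subset\mathbb R$ and $V'=\{1,2\}$, adding the farthest point first, or first adding a vertex of $\conv(V\cup V')$, swallows the point $1$. The correct version is to choose a point of $V'$ that is a vertex of $\conv(V\cup V')$ and insert it \emph{last}, then recurse on the remaining points. Such a vertex exists because $\conv(V\cup V')\ne\conv V$. With this order every inserted point lies outside the current hull, so only the beneath--beyond step is needed and the stellar case disappears.
\item The stellar step does not keep ``every simplex not containing $w$ in its relative interior.'' It keeps exactly the simplices not containing $w$, i.e., those not having $\sigma$ as a face. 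Your later argument uses the correct condition.
\item The visibility condition should read $(x,w]\cap\conv W=\varnothing$.
\item The covering step needs one explicit line: if a visible point $x$ lies in $\rho^\circ$ with $\rho\in\mathcal S$, then $\rho$ is visible. This follows from your facet characterization together with Lemma~\ref{lemma supporting hyperplane and triangulation}, or directly by convexity.
\end{itemize}
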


We will also use the following simple observation.

\begin{lemma}
\label{lemma intersection of simplices}
    Let $H$ be a hyperplane in $\mathbb R^n$ and let $\sigma\subseteq H$ be a simplex with $n$ vertices. If points $x_1$ and $x_2$ lie in the same open halfspace determined by $H$, then the relative interiors of the simplices $\conv (\{x_1\}\cup \verts \sigma)$ and $\conv (\{x_2\}\cup \verts \sigma)$ intersect.
\end{lemma}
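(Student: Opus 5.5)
The idea is to exhibit one explicit point lying in both relative interiors. Write $\verts\sigma=\{w_1,\dots,w_n\}$. Since $\sigma$ is a simplex with $n$ vertices in $\mathbb R^n$, these vertices are affinely independent, and so $\aff\sigma$ is an $(n-1)$-dimensional affine subspace contained in $H$. Hence $\aff\sigma=H$. Because $x_1,x_2\notin H$, each set $\{x_j\}\cup\verts\sigma$ is affinely independent. Therefore $\tau_j:=\conv(\{x_j\}\cup\verts\sigma)$ is an $n$-dimensional simplex, and its relative interior is its ordinary interior.

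Let $c=\frac1n\sum_i w_i$ be the barycenter of $\sigma$, a point of $\sigma^\circ$. Choose an affine functional $h$ with $H=\{h=0\}$ and $h(x_1),h(x_2)>0$; this is possible since $x_1,x_2$ lie in the same open halfspace. For small $t>0$, consider
\[
y_j(t)=(1-t)c+tx_j,
\]
a positive convex combination of $x_j$ and all $w_i$, so $y_j(t)\in\tau_j^\circ$. The two segments from $c$ toward $x_1$ and toward $x_2$ are in general different, so we need a single common point. The plan is to show that $c$ has a neighborhood $U$ with $U\cap\{h>0\}\subseteq\tau_j$ for each $j$. Once this holds, the point $y=c+\varepsilon e$, where $e$ is a vector with $h(c+e)>h(c)$ and $\varepsilon>0$ is small, lies in $U\cap\{h>0\}$. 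It is thus in both $\tau_1$ and $\tau_2$ and not on $H$.

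It remains to check that $y$ lies in each interior, not merely in each $\tau_j$. Write $y$ in barycentric coordinates with respect to $\tau_j$. The coefficient of $x_j$ equals $h(y)/h(x_j)>0$. The coefficients of the $w_i$ depend continuously on $y$ and equal $1/n$ at $y=c$, so they remain positive for $y$ near $c$. So every coefficient is positive and $y\in\tau_j^\circ$. The neighborhood claim from the previous paragraph is this same continuity fact: near $c$, every point with $h>0$ has all barycentric coordinates positive.

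The main point requiring care is to set up the barycentric coordinate map $y\mapsto(\lambda_0,\dots,\lambda_n)$ for $\tau_j$ as an affine, hence continuous, map defined on all of $\mathbb R^n$. This works because $\{x_j,w_1,\dots,w_n\}$ is an affine basis of $\mathbb R^n$. One then identifies $\lambda_0$ with $h(y)/h(x_j)$: since $h$ is affine and vanishes at each $w_i$, we get $h(y)=\lambda_0 h(x_j)$. The rest follows by choosing $\varepsilon$ smaller than the two continuity thresholds, one for each $j$.
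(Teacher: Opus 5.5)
The paper gives no proof of this lemma. It is stated as a ``simple observation'' among the standard facts whose proofs are omitted, so there is no author's argument to compare with.

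Your proof is correct and complete:
\begin{itemize}
\item Since $\aff\sigma=H$ and $x_j\notin H$, the set $\{x_j,w_1,\dots,w_n\}$ is an affine basis. Hence the barycentric coordinates are affine, and so continuous, functions on all of $\mathbb R^n$.
\item The identity $\lambda_0=h(y)/h(x_j)$ makes the $x_j$-coordinate positive as soon as $h(y)>0$.
\item At $c$ the $w_i$-coordinates all equal $1/n$, so continuity keeps them positive for $y$ near $c$.
\end{itemize}
This shows that $y$ is a positive convex combination of the vertices of each $\tau_j$, which is exactly the paper's definition of $\tau_j^\circ$.

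The exposition could be tightened. The segments $y_j(t)$ and the separate neighborhood claim are never used, because the barycentric computation in your third paragraph already proves everything.

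You can also avoid continuity entirely. Take $y=(1-t)c+tx_1$, which lies in $\tau_1^\circ$ for every $t\in(0,1)$. Write $x_1=\mu_0x_2+\sum_i\mu_iw_i$ with $\sum_{i=0}^n\mu_i=1$; applying $h$ gives $\mu_0=h(x_1)/h(x_2)>0$. Then
\[
y=t\mu_0\,x_2+\sum_{i=1}^n\Bigl(\tfrac{1-t}{n}+t\mu_i\Bigr)w_i,
\]
which is a positive convex combination of the vertices of $\tau_2$ once $t>0$ is small enough. This gives an explicit common point of the two relative interiors.
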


\section{Proof of Theorem~\ref{thm:colorful}}\label{sec:colorful}

We first define the simplex to which we will apply the KKM lemma. Let $A=\{a_1,\dots,a_{d+1}\}$ be any affinely independent set in $\mathbb R^d$. We embed $A$ in $\mathbb R^d\times \mathbb R^d$ as follows:
\[
    \{0\}\times A=\big\{(0,a_1),\dots, (0,a_{d+1})\big\}\subseteq \mathbb R^d\times \mathbb R^d.
\]

For each $i$, we lift $V_i\subset \mathbb R^d$ to $\mathbb R^d\times \mathbb R^d$ by setting
\begin{equation*}
W_i:=V_i\times\{a_i\}\subset\mathbb R^d\times\{a_i\}\subset \mathbb R^d\times \mathbb R^d.
\end{equation*}
Put
\begin{equation*}
W:=W_1\sqcup\dots\sqcup W_{d+1}
\subset\mathbb R^d\times A
\subset\mathbb R^d\times\mathbb R^d.
\end{equation*}

By Lemma~\ref{lemma existence of triangulation}, there is a triangulation $\mathcal T$ of $W$.
By~\eqref{eq:partition}, we have
\begin{equation}
\label{eq:conv-w}
\conv W
=
\bigsqcup\big\{\sigma^\circ:\sigma\in\mathcal T\big\}.
\end{equation}

We now define the closed sets to which we will apply the KKM lemma. For each $i$, let
\begin{equation*}
U_i
:=
\bigsqcup\big\{
\sigma^\circ:\sigma\in\mathcal T
\text{ with at most one vertex in }W_i
\big\}.
\end{equation*}
The condition on the vertices is inherited by faces, so these simplices form a subcomplex of $\mathcal T$. Hence
\begin{equation*}
U_i
=
\bigcup\big\{
\sigma:\sigma\in\mathcal T
\text{ with at most one vertex in }W_i
\big\},
\end{equation*}
and $U_i$ is closed.

Each simplex in $\mathcal T$ has at most $2d+1$ vertices, so it has at most one vertex in at least one of the sets $W_1,\dots,W_{d+1}$.
Together with~\eqref{eq:conv-w} and the definition of $U_i$, this gives
\begin{equation}
\label{eq:cover-w}
\conv W=U_1\cup\dots\cup U_{d+1}.
\end{equation}

For each $i$, put
\[
H_i:=\mathbb R^d\times \aff (A\setminus \{a_i\}).
\]
By definition, $H_i\cap W=W\setminus W_i$. Moreover, $W_i=V_i\times \{a_i\}\subset \mathbb R^d\times\{a_i\}$ lies entirely in one of the open halfspaces determined by $H_i$. Lemma~\ref{lemma supporting hyperplane and triangulation} therefore implies that $\{\sigma \in \mathcal T:\verts \sigma\subseteq W\setminus W_i\}$ is a triangulation of $W\setminus W_i$. Thus
\begin{equation*}
\conv(W\setminus W_i)
=
\bigsqcup\big\{
\sigma^\circ:\sigma\in\mathcal T
\text{ with no vertex in }W_i
\big\}
\subseteq U_i,
\end{equation*}
where the inclusion follows from the definition of $U_i$.

We are now ready to verify the conditions of the KKM lemma. Since $0\in\conv V_i$ for each $i$, we have
\begin{equation*}
\conv(\{0\}\times A)
\subseteq
\conv W
=
U_1\cup\dots\cup U_{d+1}
\end{equation*}
and
\begin{equation*}
\conv\bigl(\{0\}\times(A\setminus\{a_i\})\bigr)
\subseteq
\conv(W\setminus W_i)
\subseteq U_i
\end{equation*}
for each $i$. Thus the simplex $\conv(\{0\}\times A)$ with $d+1$ vertices and the sets
$U_i$ satisfy the conditions of Theorem~\ref{thm:kkm}.
Applying that theorem, we obtain
\begin{equation*}
\conv(\{0\}\times A)
\cap U_1\cap\dots\cap U_{d+1}
\ne\varnothing.
\end{equation*}
Choose $a\in\conv A$ such that $(0,a)$ lies in this intersection.

Since
\begin{equation*}
U_1\cap\dots\cap U_{d+1}
=
\bigsqcup\big\{
\sigma^\circ:\sigma\in\mathcal T
\text{ with at most one vertex in }W_i
\text{ for each }i
\big\},
\end{equation*}
the unique simplex $\tau \in \mathcal T$ whose relative interior contains $(0,a)$ has at most one vertex in each $W_i$.
Adding a point from each missing $W_j$ to $\verts \tau$, we obtain a set of size $d+1$ points, one from each $W_i$. Denoting the corresponding point from $W_i$ in this set by $(v_i,a_i)$, where $v_i\in V_i$, we obtain
\begin{equation*}
(0,a)
\in\tau^\circ\subseteq
\conv\{(v_1,a_1),\dots,(v_{d+1},a_{d+1})\}.
\end{equation*}
Projecting this inclusion onto the first factor completes the proof.

\section{Proof of the very colorful Carath\'eodory theorem}\label{sec:very-colorful}
We now prove the following strengthening \cite{HolmsenPachTverberg2008,ArochaEtAl2009}.

\begin{theorem}[Very colorful Carath\'eodory theorem]\label{thm:very-colorful}
Let $V_1,\dots,V_{d+1}\subset\mathbb R^d$ be nonempty finite sets such that $0\in\conv(V_i\cup V_j)$ for every distinct $i$ and $j$.
Then there exist points $v_1\in V_1,\dots,v_{d+1}\in V_{d+1}$ such that
\begin{equation*}
0\in\conv\{v_1,\dots,v_{d+1}\}.
\end{equation*}
\end{theorem}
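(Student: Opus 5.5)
I would run the argument of Section~\ref{sec:colorful} again, but with the lifted configuration enlarged so that the facet condition of Theorem~\ref{thm:kkm} no longer needs $0\in\conv V_i$. As before, fix an affinely independent $A=\{a_1,\dots,a_{d+1}\}\subset\mathbb R^d$ and put $W_i:=V_i\times\{a_i\}$, $W:=W_1\sqcup\dots\sqcup W_{d+1}$, and let $\mathcal T$ be a triangulation of $W$ (Lemma~\ref{lemma existence of triangulation}). Let $g$ be the centroid of $A$. Choose $c_i:=a_i+t(a_i-g)$ for a fixed $t>0$, so that $C=\{c_1,\dots,c_{d+1}\}$ is a dilated copy of $A$ with $\conv A\subset(\conv C)^\circ$. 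The points $(0,c_i)$ lie outside $\conv W\subseteq\mathbb R^d\times\conv A$. Lemma~\ref{lemma extension of triangulation} therefore gives a triangulation $\mathcal T'$ of $W':=W\cup(\{0\}\times C)$ that contains $\mathcal T$ as a subcomplex.

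The KKM step then goes through formally. Form the $d+1$ groups $W'_i:=W_i\cup\{(0,c_i)\}$ and set
\[
U_i:=\bigcup\{\sigma\in\mathcal T':\ \sigma\text{ has at most one vertex in }W'_i\}.
\]
Each $U_i$ is closed, being the underlying set of a subcomplex. The $U_i$ cover $\conv W'\supseteq\conv(\{0\}\times C)$ because a simplex in $\mathbb R^{2d}$ has at most $2d+1$ vertices, so it has at most one vertex in some group. For the facet opposite $(0,c_i)$, I would apply Lemma~\ref{lemma supporting hyperplane and triangulation} to the hyperplane $\{0\}\times\mathbb R^d$ and the set $W'$ to get control of the simplices inside $\{0\}\times\mathbb R^d$. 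I would then argue that the simplices of $\mathcal T'$ covering $\conv(\{0\}\times(C\setminus\{c_i\}))$ have no vertex in $W'_i$; if this is awkward, I would perturb the choice of $C$ slightly. Theorem~\ref{thm:kkm} then yields a point $(0,a)\in\conv(\{0\}\times C)$ whose carrier $\tau\in\mathcal T'$ has at most one vertex in each $W'_i$.

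The main obstacle is to guarantee that this carrier $\tau$ uses no auxiliary vertex $(0,c_j)$, i.e.\ that $\tau\in\mathcal T$. In that case $\tau$ has at most one vertex in each $W_i$, and completing and projecting exactly as in Section~\ref{sec:colorful} finishes the proof. Here the pairwise hypothesis must enter. For $i\ne j$, the condition $0\in\conv(V_i\cup V_j)$ gives a point $(0,b_{ij})\in\conv(W_i\cup W_j)$ with $b_{ij}$ on the edge $[a_i,a_j]$. Hence the convex set $P:=\{a:(0,a)\in\conv W\}\subseteq\conv A$ meets every edge of $\conv A$.

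My plan is to use Lemma~\ref{lemma intersection of simplices} to rule out a bad $\tau$. Suppose $\tau$ contains some $(0,c_j)$. I would take the facet of $\tau$ not containing it, or a suitable facet of a simplex of $\mathcal T$ lying on the boundary of $\conv W$, and compare the cone from $(0,c_j)$ over that facet with a simplex of $\mathcal T$ containing the edge points $(0,b_{jk})$. Lemma~\ref{lemma intersection of simplices} should then force two distinct simplices of $\mathcal T'$ to have intersecting relative interiors, contradicting~\eqref{eq:partition}. If this direct argument fails, my fallback is to restrict KKM to a smaller simplex: I would choose the $c_i$ inside $P$ near the edge points and rerun the cover argument relative to $P$.
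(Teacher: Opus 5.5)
There is a genuine gap at exactly the step you call the main obstacle, and with your definition of $U_i$ it cannot be closed. The auxiliary point $(0,c_i)$ counts as the one permitted vertex of $W'_i$, so it disqualifies nothing. In particular, the vertex $(0,c_1)$ of your KKM simplex is itself a $0$-simplex of $\mathcal T'$. It has one vertex in $W'_1$ and none in the other groups, hence $(0,c_1)\in U_1\cap\dots\cap U_{d+1}$. The conclusion of Theorem~\ref{thm:kkm} is therefore satisfied vacuously. The lemma may return $(0,c_1)$, whose carrier $\{(0,c_1)\}$ is a legitimate simplex of $\mathcal T'$. No argument via Lemma~\ref{lemma intersection of simplices} can rule this $\tau$ out, because nothing about it is contradictory.

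More generally, the hyperplane $\mathbb R^d\times\aff(C\setminus\{c_i\})$ supports $W'$ and meets it exactly in $\{0\}\times(C\setminus\{c_i\})$. By Lemma~\ref{lemma supporting hyperplane and triangulation}, every facet of $\conv(\{0\}\times C)$ is then a simplex of $\mathcal T'$, so the whole relative boundary lies in $\bigcap_i U_i$. That hyperplane is also the correct route to your facet condition; $\{0\}\times\mathbb R^d$ has codimension $d$ in $\mathbb R^{2d}$, so the lemma does not apply to it. Carriers with at least one vertex in $W$ would actually be harmless, since auxiliary vertices have first coordinate $0$. The fatal carriers are precisely these all-auxiliary boundary faces, and they are present regardless of the pairwise hypothesis.

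The missing idea is to let the auxiliary point of color $i$ disqualify a simplex from the $i$-th set, and to place it where the pairwise hypothesis can act on it. The paper adds $\tilde w_i=(0,a_i)$ only when $0\notin\conv V_i$; otherwise $(0,a_i)\in\conv W$ already. It takes $\tilde U_i$ to consist of the simplices with at most one vertex in $W_i$ and not containing $\tilde w_i$. Then no simplex in $\bigcap_i\tilde U_i$ has an auxiliary vertex, so the carrier of the KKM point automatically lies in $\mathcal T$. The price is that the covering condition is no longer a pigeonhole triviality. It is restored by two facts:
\begin{enumerate}
\item No simplex of $\tilde{\mathcal T}$ contains two auxiliary points $\tilde w_i,\tilde w_j$, because your point $(0,b_{ij})\in\conv W$ lies in the relative interior of the segment joining them. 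This is where $0\in\conv(V_i\cup V_j)$ enters, and it uses that the auxiliary points sit at $(0,a_i)$ rather than outside $\mathbb R^d\times\conv A$.
\item A simplex $\conv(\{\tilde w_i\}\cup\verts\sigma)$ lying in no $\tilde U_j$ would need two vertices in every $W_j$ with $j\ne i$, forcing $\sigma\subseteq H_i$. Then Lemma~\ref{lemma intersection of simplices} makes its relative interior meet $\conv W$, which is impossible for a simplex of $\tilde{\mathcal T}\setminus\mathcal T$.
\end{enumerate}
Your fallback of shrinking the KKM simplex into $P$ is too vague to assess, and it does not by itself address this covering issue.
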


\begin{proof}
Define $A$, $W_i$, $W$, $\mathcal T$, $U_i$, and $H_i$ as in the proof of Theorem~\ref{thm:colorful}. We now modify some of the sets $W_i$ by adding new points.

Call an index $i$ \emph{avoiding} if $0\notin\conv V_i$, and associate with it the \emph{avoiding point} $\tilde w_i:=(0,a_i)$. Let
\begin{equation*}
\tilde{W}_i:=
\begin{cases}
    W_i\cup\{\tilde{w}_i\}, & \text{if $i$ is avoiding};\\
    W_i, & \text{otherwise}.
\end{cases}
\end{equation*}
Put $\tilde{W}=\tilde{W}_1\sqcup\dots\sqcup\tilde{W}_{d+1}$.

For each avoiding $i$, we have $\tilde w_i\notin\conv W$. Indeed, affine independence of $A$ implies that any convex representation of a point with second coordinate $a_i$ by points of $W$ can use only points of $W_i$. Thus $\tilde w_i\in\conv W$ would imply $0\in\conv V_i$, contrary to the assumption that $i$ is avoiding.

By Lemma~\ref{lemma extension of triangulation}, there is a triangulation $\tilde{\mathcal T}$ of $\tilde W$ containing $\mathcal T$ as a subcomplex, with
\begin{equation}
\label{eq:conv-tilde-w}
\conv\tilde{W}=\bigsqcup\big\{\sigma^\circ:\sigma\in\tilde{\mathcal T}\big\}.
\end{equation}

For distinct avoiding indices $i$ and $j$, any convex representation of $0$ by points of $V_i\cup V_j$ uses both color classes with positive total weights. After lifting, this gives a point of $\conv(W_i\cup W_j)$ in the relative interior of the line segment connecting $\tilde w_i$ and $\tilde w_j$. Thus this line segment cannot be a simplex of $\tilde{\mathcal T}$: its relative interior meets $\conv W=|\mathcal T|$, and hence it meets the relative interior of a simplex of $\mathcal T$, contradicting the disjointness of relative interiors in $\tilde{\mathcal T}$. Consequently, no simplex of $\tilde{\mathcal T}$ contains two avoiding points. Moreover, any simplex of $\tilde{\mathcal T}$ whose vertices all belong to $W$ is already a simplex of $\mathcal T$, by the same disjointness argument. Therefore, every simplex in $\tilde{\mathcal T}\setminus\mathcal T$ has exactly one avoiding vertex and is of the form
$\conv(\{\tilde w_i\}\cup\verts \sigma)$ with $\sigma\in\mathcal T$. By~\eqref{eq:conv-w} and~\eqref{eq:conv-tilde-w}, its relative interior is disjoint from $\conv W=|\mathcal T|$.

For each $i$, define
\begin{equation*}
\tilde{U}_i
:=
\bigsqcup\big\{
\sigma^\circ:\sigma\in\tilde{\mathcal T}
\text{ with at most one vertex in }W_i
\text{ and no vertices in }\tilde{W}_i\setminus W_i
\big\}.
\end{equation*}
The defining conditions are inherited by faces, so these simplices form a subcomplex of $\tilde{\mathcal T}$. Hence $\tilde U_i$ is closed.

We claim that
\begin{equation}\label{eq:new-cover}
\conv\tilde{W}=\tilde{U}_1\cup\dots\cup\tilde{U}_{d+1}.
\end{equation}
By~\eqref{eq:cover-w}, the sets $U_i\subseteq\tilde U_i$ already cover $\conv W$.
In view of~\eqref{eq:conv-tilde-w} and the definition of $\tilde{U}_i$, it remains to show that for each new simplex
$\tau=\conv(\{\tilde w_i\}\cup\verts \sigma)\in \tilde{\mathcal T}\setminus \mathcal T$, where $\sigma\in\mathcal T$, its relative interior $\tau^\circ$ is contained in $\tilde U_j$ for some $j$.
Suppose that $\tau^\circ$ is not contained in any $\tilde U_j$ with $j\ne i$.
Since $\tilde w_i$ is the only avoiding vertex of $\tau$, this means that $\tau$ has at least two vertices in each $W_j$ with $j\ne i$.
It has at most $2d+1$ vertices in total, so $\sigma$ has exactly two vertices in the set $W_j$ for each $j\ne i$ and none in $W_i$. Therefore, $\sigma$ lies in the hyperplane $H_i=\mathbb R^d\times \aff(A\setminus \{a_i\})$. Let $w_i$ be any point of $W_i$. Since $w_i$ and $\tilde w_i$ lie in the same open halfspace determined by $H_i$, Lemma~\ref{lemma intersection of simplices} implies that
\[
\tau^\circ\cap\conv(\{w_i\}\cup\verts\sigma)^\circ\ne\varnothing.
\]
Since $\{w_i\}\cup\verts\sigma\subset W$, this means that $\tau^\circ$ intersects $\conv W$, a contradiction. This proves~\eqref{eq:new-cover}.

For every $i$, Lemma~\ref{lemma supporting hyperplane and triangulation} gives
\[
\conv(\tilde W\setminus\tilde W_i)\subseteq\tilde U_i,
\qquad (0,a_i)\in\conv\tilde W_i,
\]
where the second assertion follows from the definitions of an avoiding index and of $\tilde W_i$. Together with~\eqref{eq:new-cover}, these are precisely the covering and facet conditions used in the proof of Theorem~\ref{thm:colorful}. Repeating its final KKM argument, with the corresponding objects replaced by their tilded versions, yields a simplex with at most one vertex in each $W_i$ and, by the definition of $\tilde U_i$, none of the added avoiding points. Adding one vertex from each missing $W_i$ and projecting onto the first factor completes the proof.
\end{proof}

\subsection*{Acknowledgements}
The author is grateful to Mikhail Bludov and Aliaksei Vasileuski for discussions that inspired the proofs presented here.

No AI model is used in the discovery or proof of any mathematical result presented in this paper.

\subsection*{Funding}
A.P. is partially supported by the NSF grant DMS 2349045.

\end{document}